\documentclass[12pt]{amsart}
\usepackage{stmaryrd} 
\usepackage{hyperref} 
 \textwidth       15.0cm
 \evensidemargin   0.6cm         
 \oddsidemargin    0.6cm
 
 \theoremstyle{plain}
 \newtheorem*{teorema}{Theorem}
 \newtheorem{theorem}{Theorem}
  \newtheorem{lemma}[theorem]{Lemma}
 \newtheorem{proposition}[theorem]{Proposition}
 
 \theoremstyle{definition}
 \newtheorem{remark}[theorem]{\bf Remark}

 \newcommand{\N}{\mathbb N}
 \newcommand{\T}{\mathbb T}
 \newcommand{\Z}{\mathbb Z}
 \newcommand{\R}{\mathbb R}
 \newcommand{\mS}{\mathbb S}
 \newcommand{\cA}{\mathcal A}
 \newcommand{\tcA}{\tilde\cA}
 \newcommand{\cC}{\mathcal C}

 \newcommand{\cL}{\mathcal L}
 \newcommand{\cM}{\mathcal M}
 \newcommand{\cP}{\mathcal P}
 \newcommand{\tcM}{\widetilde{\mathcal M}}
 \newcommand{\cS}{\mathcal S}

 \newcommand{\ub}{\bar{u}}
 \newcommand{\xb}{\bar{x}}
 \newcommand{\yb}{\bar{y}}
 \newcommand{\af}{\alpha}
 \newcommand{\be}{\beta}
 
 \newcommand{\ga}{\gamma}
 \newcommand{\Ga}{\Gamma}
 \newcommand{\de}{\delta}
 
 \newcommand{\lam}{\lambda}
 
 \newcommand{\om}{\omega}
\newcommand{\afb}{\bar{\alpha}}
 \newcommand{\beb}{\bar{\beta}}
 \newcommand{\gab}{\bar{\gamma}}
\newcommand{\lb}{\left\llbracket}
\newcommand{\rb}{\right\rrbracket}

 \newcommand{\mL}{\mathbb L}
 \newcommand{\mH}{\mathbb H}
 \newcommand{\dga}{\dot{\ga}}
 \newcommand{\dbe}{\dot{\be}}
 \newcommand{\sop}{\operatorname{supp}}

 \begin{document}
 \title[Exponential Convergence]{Hyperbolicity and exponential convergence
of the Lax Oleinik semigroup for Time Periodic Lagrangians} 
 \author[H. S\'anchez-Morgado]{H\'ector S\'anchez-Morgado}
\address{ Instituto de Matem\'aticas, UNAM. Ciudad Universitaria C. P. 
          04510, Cd. de M\'exico, M\'exico.}
 \email{hector@math.unam.mx}
\subjclass{37J50, 49L25, 70H20} 
\keywords{viscosity solution. Hamilton-Jacobi equation,
 periodic Lagrangian, Aubry set.}

 \maketitle
 
 \begin{abstract}
In this note we prove exponential convergence to time-periodic states of 
the solutions of time-periodic Hamilton-Jacobi equations on the torus,
assuming that the Aubry set is the union of a finite number of
hyperbolic periodic orbits of the Euler Lagrange flow.
The period of limiting solutions is the least common multiple of the periods
of the orbits in the Aubry set. This extends a result that we obtained
in \cite{IS} for the autonomous case.
 \end{abstract}

 \section{Introduction}
 
 Let $M$ be a closed connected manifold, $TM$ its tangent bundle. Let
 $L:TM\times \R\to \R$ be a $C^k$ $k\ge2$ Lagrangian. We will assume for
 the Lagrangian the hypothesis of Mather's seminal paper~\cite{M}.
 \begin{enumerate}
 \item 
 {\em Convexity}. The Lagrangian $L$ restricted to $T_xM$, in linear 
 coordinates has positive definite Hessian.
\item 
{\em Superlinearity}. For some Riemannian metric we have 
 $$
 \lim_{|v|\to \infty}\frac{L(x,v,t)}{|v|}=\infty,
 $$
 uniformly on $x$ and $t$.
\item 
 {\em Periodicity}. The Lagrangian is periodic in time, i.e.
 $$L(x,v,t+1)=L(x,v,t),$$
 for all $x,v,t$.
\item 
 {\em Completeness}. The Euler Lagrange flow $\phi_{t}$ associated to the
   Lagrangian is complete.
 \end{enumerate}
Let $H:T^*M\times\R\to \R$ be the Hamiltonian associated to the Lagrangian:
 \begin{equation}
   \label{eq:hamiltonian}
   H(x,p,t)=\max_{v\in T_xM}pv-L(x,v,t).
 \end{equation}
For $c\in\R$ consider the Hamilton Jacobi equation 
\begin{equation}\label{hj}
u_t+H(x,d_xu,t)=c.
\end{equation}
It is know (\cite{CIS}, \cite{B1}) that there is only one value $c=c(L)$, the so
called critical value, such that \eqref{hj} has a time periodic
viscosity solution. 
In this article we prove
\begin{teorema}
Assume $M=\T^d$ and the Aubry set is the union of a
finite number of hyperbolic periodic orbits 
of  the Euler Lagrange flow and let $N$ be the least common multiple of
their periods. There is $\mu>0$ such that for any viscosity solution
$v:\T^d\times[s,\infty[\to\R$ of \eqref{hj} with $c=c(L)$,  
there is an $N$-periodic viscosity solution $u:\T^d\times\R\to\R$ 
of \eqref{hj} and $K>0$ such that for any $t>0$ 
\[|v(x,t)-u(x,t)|\le K e^{-\mu t}\]
\end{teorema}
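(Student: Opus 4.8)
The plan is to adapt the method of our autonomous paper \cite{IS} to the periodic setting. Suspend the Euler--Lagrange flow of the $1$-periodic Lagrangian $L$ to an autonomous flow on $T\T^d\times(\R/\Z)$; each periodic orbit $\Gamma_i$ of the Aubry set ($1\le i\le r$) closes up after an integer number $k_i$ of time units, so $N=\operatorname{lcm}(k_1,\dots,k_r)$, and hyperbolicity of $\Gamma_i$ means that the corresponding period-$k_i$ point of the time-one map $\phi_1$ is hyperbolic. Replacing $L$ by $L-c(L)$ we may assume $c(L)=0$. The starting point is the Lax--Oleinik representation: for $s\le\tau<t$ any viscosity solution of \eqref{hj} satisfies $v(x,t)=\inf_{y\in\T^d}\{v(y,\tau)+h_{\tau,t}(y,x)\}$, the infimum attained at a Tonelli minimizer, where $h_{\tau,t}(y,x)$ is the minimal action of curves joining $(y,\tau)$ to $(x,t)$ and $h_{\tau,t}=h_{\tau+1,t+1}$ by periodicity. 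A priori estimates make $\{v(\cdot,t)\}_{t\ge s+1}$ equi-Lipschitz and equibounded in $(x,t)$, so subsequential $C^0$-limits exist and are dominated functions; the task is to promote this to exponential convergence and to assemble the limits into a single $N$-periodic viscosity solution.

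Next I use a localization lemma of Mather--Ma\~n\'e type, valid in the periodic case by \cite{CIS},\cite{B1}: there is a neighborhood $\cU$ of the Aubry set, the disjoint union of neighborhoods $\cU_i$ of the $\Gamma_i$ separated by a definite gap, and constants $T_1,\theta_1>0$ such that any action-minimizing curve of duration $T$ lies outside $\cU$ for total time at most $T_1$ and switches among the $\cU_i$ at most a bounded number of times, each switch costing at least $\theta_1$. Hence for $t-\tau$ large the value $v(x,t)$ is realized, up to a controlled error, by curves that enter some $\cU_i$, shadow $\Gamma_i$ for a long time, and leave, the remaining pieces having bounded duration and contributing only Lipschitz errors in their endpoints.

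The core of the argument, and the step I expect to be the main obstacle, is the quantitative hyperbolic estimate near one orbit. Near $\Gamma_i$ (period $k=k_i$) its local stable and unstable manifolds are, by hyperbolicity together with $\Gamma_i$ lying in the Aubry set, Lipschitz (in fact smooth) Lagrangian graphs over neighborhoods of the footpoints of the orbit, namely graphs of the differentials of the weak KAM functions attached to $\Gamma_i$. A Tonelli minimizer shadowing $\Gamma_i$ for time $nk$ has action excess over the orbit bounded, via a Morse-type second-order expansion of the action functional along $\Gamma_i$, by a constant times the product of the distances of its endpoints from the unstable and from the stable manifold, and these distances decay like $e^{-\lambda nk}$ with $\lambda>0$ the smallest hyperbolicity exponent of the $\Gamma_i$. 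Summing the resulting geometric series shows that $h_{\tau,\tau+nk}(y,y')$, for $y,y'\in\cU_i$, converges as $n\to\infty$ at rate $e^{-\lambda nk}$ to a weak KAM datum on $\Gamma_i$; combined with the localization lemma this yields, for every viscosity solution $v$, exponential convergence of the restriction of $v(\cdot,\sigma)$ to $\cU$, along each residue class of $\sigma$ modulo $N$, to a weak KAM trace determined by $v$ through finitely many real levels (one per orbit). The work here is to make the second-order expansion, the transversality needed when the endpoints push the minimizer off $\Gamma_i$, and the switching bound between distinct $\cU_i$ uniform.

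Finally I assemble the pieces. The exponential limit on $\cU$ obtained above patches, via the Peierls-barrier extension of periodic weak KAM theory (\cite{CIS},\cite{B1}), into a unique $N$-periodic viscosity solution $u:\T^d\times\R\to\R$ of \eqref{hj} whose restriction to $\cU$ is that limit; $N$-periodicity rather than $1$-periodicity is forced because the limit datum on $\Gamma_i$ is $k_i$-periodic and $N=\operatorname{lcm}(k_1,\dots,k_r)$. For the rate, fix $x$ and large $t$, and pick by the localization lemma a time $\sigma$ close to $t$ at which a minimizer for $u(x,t)$ lies in $\cU$, at a point $z\in\cU$; then $u(x,t)=u(z,\sigma)+h_{\sigma,t}(z,x)$ while $v(x,t)\le v(z,\sigma)+h_{\sigma,t}(z,x)$, so $v(x,t)-u(x,t)\le v(z,\sigma)-u(z,\sigma)$, and the reverse inequality follows symmetrically from a minimizer for $v(x,t)$. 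Hence $|v(x,t)-u(x,t)|\le\Vert v(\cdot,\sigma)-u(\cdot,\sigma)\Vert_{\cU}\le Ke^{-\mu\sigma}\le K'e^{-\mu t}$ by the hyperbolic estimate and the choice of $u$, with $\mu>0$ depending only on $L$ (the rate coming from the geometric series above) and $K$ depending on $v$ only through $\Vert v(\cdot,s)\Vert_\infty$ and the Lipschitz constant. This is the asserted bound.
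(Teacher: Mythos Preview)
Your outline has the right architecture (localize near the Aubry set, exploit hyperbolicity for a contraction rate, then compare), and your final comparison device reducing $|v(x,t)-u(x,t)|$ to $\|v(\cdot,\sigma)-u(\cdot,\sigma)\|_{\cU}$ with $\sigma$ close to $t$ is legitimate. But the paper's proof differs in several concrete technical choices, and your sketch leaves a real gap precisely where the paper does the most work.

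First, the paper begins with two reductions you omit: a rescaling $L_N=L\circ P_N$ that turns all the $k_i$-periodic orbits into $1$-periodic ones and makes the Lagrangian \emph{regular} (so the Lax--Oleinik semigroup converges at all), and then subtraction of a $C^k$ \emph{strict} critical subsolution (Theorem~\ref{subcrit}), available here because the Aubry set is a finite union of hyperbolic orbits. After this, $c=0$, $L\ge 0$, and $L=0$ exactly on $\tcA$; your localization claims (Lemmas~\ref{union} and~\ref{single} in the paper) are then one-line consequences, whereas without the strict subsolution they are not.

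Second, for the hyperbolic estimate the paper does \emph{not} use a Morse-type second-order expansion of the action. Instead it invokes a H\"older Grobman--Hartman linearization of the time-one map near each $(x_i,v_i)$ (Proposition~\ref{conjugacy}), which gives directly $d(\ga(j+n),x_i)\le C e^{-\af\lam n}$ for a minimizer trapped in $V_i$ on an interval of length $2n$. This yields the bound $\ub(x,[t])-\cL_t u(x)\le K\,d(\ga(j),x_i)$ via the Peierls barrier, which is the lower inequality. Your second-order route is plausible but would need to be carried out on the Poincar\'e section, and in any case it is a different tool with a different rate.

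The substantive gap is the upper inequality. You assert that exponential convergence of $h_{\tau,\tau+nk}(y,y')$ for $y,y'$ in a \emph{single} $\cU_i$, ``combined with the localization lemma'', yields exponential convergence of $v(\cdot,\sigma)|_\cU$. This step hides the whole difficulty when the Aubry set has several orbits. The optimal point $y$ realizing $\ub(x,[\tau])=u(y)+h(\yb,x,[\tau])$ may lie far from $\cU_i$, and the infinite-time semistatic curve realizing $h(\yb,x,[\tau])$ can accumulate successively on several $\Ga_{k_1},\ldots,\Ga_{k_l}$ before reaching $\Ga_j$. To manufacture a \emph{finite-time} test curve for $\cL_{\tau+k}u(x)$ with action within $Ce^{-\mu k}$ of $\ub(x,[\tau])$, the paper proves a separate structural result (Proposition~\ref{cadena}): a chain of pairwise distinct indices $k_1,\ldots,k_l=j$ and semistatic connecting orbits $\be_r$ with $\af$-limit $\Ga_{k_r}$ and $\om$-limit $\Ga_{k_{r+1}}$ such that $h(\xb_{k_1},\xb_j)=\sum_r h(\xb_{k_r},\xb_{k_{r+1}})$. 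The test curve $\af_k$ is then built by concatenating truncations of $\be_0,\be_1,\ldots,\be_{l-1}$, a stretch of $\ga_j$, and a truncation of a backward calibrating curve $\be_{x,\tau}$, glued by short segments $c_r$ whose $L$-action is $O(e^{-\lam n})$ because $L$ vanishes on $\tcA$. Nothing in your bounded-switching statement produces these connecting orbits or the additivity of $h$ along the chain; without them you cannot bound $\cL_{\tau+k}u(x)-\ub(x,[\tau])$ from above when more than one orbit is involved. This is exactly the point the paper flags as new relative to the autonomous case, and your proposal does not supply it.
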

In \cite{IS} we proved a similar result in the autonomous case under
the assumption that the Aubry set consists in a finite number of 
hyperbolic critical points of the Euler Lagrange flow and claimed 
wrongly, as observed in \cite{WJ}, that it also holds when one changes some
critical points by periodic orbits.
Part of the present note follows same ideas as in the autonomus case, but
it was necessary to provide proofs of some statements in that part,
which are the contents of Lema \ref{single} and Proposition \ref{cadena}.

\section{Preliminaries on weak KAM theory}
\label{sec:preliminar}
Define the action of an absolutely  continuous curve  $\ga:[a,b]\to M$ as
 $$A_L(\ga):=\int_{a}^{b}L(\ga(\tau),\dot\ga(\tau),\tau) d\tau $$
A curve $\ga:[a,b]\to M$ is {\em closed } if
$\ga(a)=\ga(b)$ and $b-a\in\Z$.  The critical value can be defined as
\[c(L):= \min \{k\in\R:\forall\ga\text{ closed }A_{L+k}(\ga) \ge 0\}\]
Let ${\cP}(L)$ be the set of probabilities on the Borel $\sigma$-algebra 
of $TM\times\mS^1$ that have compact support and are invariant under
the Euler-Lagrange flow. Then
\[c(L)= -\min \{\int Ld\mu : \mu\in\cP(L)\}.\]
The Mather set is defined as 
\[\tcM:=
\overline{\bigcup\{\sop\mu:\mu\in\cP(L),\int Ld\mu=-c(L)\}}.\]
 For $a\le b$, $x,y\in M$ let $\cC(x,a,y,b)$ be the set 
 of absolutely continuous curves $\ga:[a,b]\to M$ with $\ga(a)=x$ and
 $\ga(b)=y$. 
For $a\le b$ define $F_{a,b}:M\times M\to\R$ by  
  $$F_{a,b}(x,y):= \min\{A_L(\ga):\ga\in\cC(x,a,y,b)\}.$$
Define $\cL_t:C(M,\R)\to C(M,\R)$ by 
\[\cL_tu(x)=\inf_{y\in M}u(y)+F_{0,t}(y,x).\]
Then $v(x,t)=\cL_tu(x)$ is the viscosity solution to \eqref{hj} for $c=0$
with $v(x,0)=u(x)$.
$(\cL_n)_{n\in\N}$ is a semi-group known as the Lax-Oleinik semi-group.

For $t\in\R$ let $[t]$ be the corresponding point in
${\mS}^1$ and $\lb t\rb$ be the integer part of $t$.
 Define the {\em action functional}
$\Phi:M\times{\mS}^1\times M\times{\mS}^1\to\R$ by  
 \[\Phi(x,[s],y,[t]):=\min\{F_{a,b}(x,y)+c(L)(b-a):[a]=[s], [b]=[t]\},\]
 and the {\em Peierls barrier} $h:M\times{\mS}^1\times M\times{\mS}^1\to\R$ by
\begin{equation}
  \label{eq:barrier}
 h(x,[s],y,[t]):=\liminf_{\lb b-a\rb\to\infty}
\bigl(F_{a,b}(x,y)+c(L)(b-a)\bigr)_{[a]=[s], [b]=[t]} 
\end{equation}
 We have $-\infty<\Phi\le h<\infty$.

The critical value is the unique number $c$ such that \eqref{hj}
has viscosity solutions $u:M\times\mS^1\to\R$. 
In fact \cite{CIS}, for any $(p,[s])$ the functions $(x,[t])\mapsto h(p,[s],x,[t])$ 
and $(x,[t])\mapsto-h(x,[t],p,[s])$ are respectively 
{\em backward} and {\em forward} viscosity solutions of \eqref{hj}. 
Set $c=c(L)$ and let $\cS^- (\cS^+)$ be the set of {\em backward}
({\em forward}) viscosity solutions of \eqref{hj}.

The Lagrangian is called regular if the $\liminf$ in
\eqref{eq:barrier} is a $\lim$ and in that case, for each $s,t\in\R$
the convergence of the sequence $(F_{a,b})_{[a]=[s], [b]=[t]}$ is uniform. 
The Lagrangian is regular if and only if the Lax-Oleinik semi-group
converges \cite{B1}, so that for each $u\in C(M,\R)$ there exists
$\ub\in C(M\times\mS^1,\R)$ such that
\[\lim_{k\to \infty}\cL_{t+k}u(x)+c(t+k)=\ub(x,[t])\]
and in fact
\[\ub(x,[t])=\inf_{y\in M}u(y)+h(y,[0],x,[t]).\]
A subsolution of \eqref{hj} always means a viscosity subsolution.
A curve $\ga:I\to M$ {\em calibrates} a subsolution
$u:M\times\mS^1\to\R$ of \eqref{hj} if 
\[u(\ga(b),[b])-u(\ga(a),[a])=A_{L+c}(\ga|[a,b])\]
for any $[a,b]\subset I$. If $u\in\cS^- (\cS^+)$, for any 
$(x,[s])\in M\times\mS^1$ there is $\ga:]-\infty,s]\to M$
($\ga:[s,\infty[\to M$) that calibrates $u$ and $\ga(s)=x$.
  
A pair $(u_-,u_+)\in \cS^-\times\cS^+$
is called {\em conjugated} if $u_-=u_+$ on $\cM=\pi(\tcM)$.
For such a pair $(u_-,u_+)$, we define $I(u_-,u_+)$ as the set where
$u_-$ and $u_+$ coincide. If $(x,[s])\in I(u_-,u_+)$ then 
$u_\pm$ is differentiable at $(x,[s])$ and
$d_xu_-(x,[s])=d_xu_+(x,[s])$. Let
\[I^*(u_-,u_+)=\{(x,d_xu_\pm(x,[s]),[s]): \,(x,[s])\in I(u_-,u_+)\}.\]
We may define the {\em Aubry set} either as the set (\cite{B}) 
\[\cA^*:=\bigcap\{I^*(u_-,u_+):
(u_-,u_+)\text{ conjugated}\}\subset T^*M\times\mS^1\]
or as its pre-image under the Legendre transformation (\cite{F})
\[\tcA:=\{(x,H_p(x,p,t),[t]):(x,p,[t])\in\cA^*\}.\]
 The projection of either Aubry set in $M\times\mS^1$ is
 \[\cA=\{\,(x,[t])\in M\times\mS^1:h(x,[t],x,[t])=0\}.\]
An important tool for the proof of our result is the existence of 
strict $C^k$ critical subsolutions in our setting, that extends the result  
of Bernard \cite{B} for the autonomous case and its written out in the 
thesis of E. Guerra \cite{G}.
\begin{theorem}\label{subcrit}
Assume the Aubry set $\tcA$ is the union of a finite number of
hyperbolic periodic orbits of the the Euler Lagrange flow, then there is
a $C^k$ subsolution $f$ of \eqref{hj} such that 
\[f_t+H(x,d_xf(x,[t]),t)< c\]
for any $(x,[t])\notin\cA$.
\end{theorem}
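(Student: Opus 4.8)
The plan is to build $f$ by gluing a smooth model near the Aubry set, produced from the hyperbolic structure, to a smoothing of a (known, but non-smooth) critical subsolution that is already strict off $\cA$.

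First I would construct the local model near each hyperbolic periodic orbit $\Ga_i\subset\tcA$. Since the Hamiltonian (equivalently Euler--Lagrange) vector field is $C^{k-1}$, its flow, and hence the local stable and unstable manifolds $W^s_{\mathrm{loc}}(\Ga_i),W^u_{\mathrm{loc}}(\Ga_i)$ of the hyperbolic orbit $\Ga_i$, are $C^{k-1}$; moreover, because $\Ga_i$ lies in the Aubry set it is a globally minimizing orbit, hence has no conjugate points, and by the standard argument that goes with this fact $W^u_{\mathrm{loc}}(\Ga_i)$ and $W^s_{\mathrm{loc}}(\Ga_i)$ project near $\Ga_i$ diffeomorphically onto a neighbourhood of $\pi(\Ga_i)$ in $\T^d\times\mS^1$. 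Fixing $(p_i,[0])\in\pi(\Ga_i)$, the functions $u_-^i:=h(p_i,[0],\cdot,\cdot)$ and $u_+^i:=-h(\cdot,\cdot,p_i,[0])$ are respectively backward and forward weak KAM solutions, they agree on $\pi(\Ga_i)$, and near $\pi(\Ga_i)$ their graphs are $W^u_{\mathrm{loc}}(\Ga_i),W^s_{\mathrm{loc}}(\Ga_i)$; since the latter are $C^{k-1}$ graphs and $\partial_tu_\pm^i=c(L)-H(x,d_xu_\pm^i,t)$, both $u_\pm^i$ are $C^k$ on a neighbourhood $\mathcal V_i$ of $\pi(\Ga_i)$. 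Put $g_i:=\tfrac12(u_-^i+u_+^i)$. Strict convexity of $H$ in $p$ gives
\[\partial_tg_i+H(x,d_xg_i,t)=c(L)-\Bigl[\tfrac12 H(x,d_xu_-^i,t)+\tfrac12 H(x,d_xu_+^i,t)-H\bigl(x,\tfrac12(d_xu_-^i+d_xu_+^i),t\bigr)\Bigr]\le c(L),\]
the bracket being bounded below by a constant times $|d_xu_-^i-d_xu_+^i|^2$; and since $W^u_{\mathrm{loc}}(\Ga_i)$ and $W^s_{\mathrm{loc}}(\Ga_i)$ meet transversally along $\Ga_i$, the quantity $|d_xu_-^i-d_xu_+^i|$ is comparable to $\operatorname{dist}(\cdot,\cA)$. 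Hence $g_i$ is a $C^k$ subsolution on $\mathcal V_i$, strict on $\mathcal V_i\setminus\cA$, with strictness modulus comparable to $\operatorname{dist}(\cdot,\cA)^2$.

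Second, I would use the (known) existence in this time--periodic setting of a $C^{1,1}$ critical subsolution $w$ strict outside $\cA$ --- the periodic analogue of Bernard's theorem \cite{B}, carried out in \cite{G}. Every critical subsolution is calibrated along the orbits of $\tcA$ and is differentiable along $\cA$ with $x$--differential the Aubry covector; so, after adding a constant to each $g_i$, one may assume $w=g_i$ and $d_xw=d_xg_i$ along $\Ga_i$, which, since $w\in C^{1,1}$ and $g_i\in C^k$, forces $|w-g_i|=O(\operatorname{dist}(\cdot,\cA)^2)$ and $|d_x(w-g_i)|=O(\operatorname{dist}(\cdot,\cA))$ on $\mathcal V_i$.

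Finally I would assemble $f$: equal to $g_i$ on a small neighbourhood of $\cA$; equal to a mollification of $w$ on the region at positive distance from $\cA$, which is harmless there because the subsolutions involved are strict with a uniform modulus on compact sets missing $\cA$, so that a small enough smoothing radius is absorbed; and, in between, a convex combination $\chi g_i+(1-\chi)w$ governed by a cut--off $\chi$, for which the subsolution inequality follows from convexity of $H$ together with the bounds on $w-g_i$ that control the error coming from $d\chi$ hitting $w-g_i$. The delicate step, and the main obstacle, is precisely this interpolation in the collar touching $\cA$: there the strictness modulus vanishes at rate $\operatorname{dist}(\cdot,\cA)^2$, too fast for a naive cut--off to survive, and one must either tune the interpolation (its location and the smoothing radius) using the quantitative matching of $w$ and $g_i$ along $\cA$, or --- the cleaner route, effectively the one taken in \cite{B}, \cite{G} --- arrange that the globally defined subsolution already coincides with $g_i$ on a whole neighbourhood of $\cA$, so that no genuine gluing occurs in the degenerate zone. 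Either way it is the hyperbolicity of the orbits of $\tcA$ that makes everything work: it provides the smooth primitives $u_\pm^i$ and fixes the exact rate at which strictness is lost.
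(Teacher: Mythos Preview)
The paper does not actually prove this theorem. It is stated as a tool and attributed to the thesis of E.~Guerra \cite{G}, extending Bernard's autonomous result \cite{B}; no argument is given in the paper itself. So there is no ``paper's own proof'' against which to compare your attempt.

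That said, your outline is a faithful reconstruction of the Bernard strategy adapted to this setting, and is likely close in spirit to what \cite{G} does: use hyperbolicity of each $\Ga_i$ to get $C^{k-1}$ local (un)stable manifolds that are exact Lagrangian graphs, hence local $C^k$ weak KAM solutions $u_\pm^i$; average them to obtain a local $C^k$ subsolution $g_i$ strict off $\cA$ with strictness of order $\operatorname{dist}(\cdot,\cA)^2$; then globalize. Two remarks. First, your invocation of a $C^{1,1}$ strict critical subsolution in the time-periodic case is not circular, but you should be explicit about where it comes from (e.g.\ the Fathi--Siconolfi/Bernard machinery transported to $M\times\mS^1$, or equivalently applied to the autonomous extension on $M\times\mS^1$), since the paper itself packages the smooth-subsolution existence precisely into the theorem you are trying to prove. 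Second, you correctly isolate the genuine difficulty: the naive cut-off interpolation fails because the strictness margin and the $O(\operatorname{dist}(\cdot,\cA)^2)$ matching error are of the same order, so the $d\chi\cdot(w-g_i)$ term is not absorbed. The clean resolution you mention --- arranging that the global subsolution already equals $g_i$ near $\cA$ so no gluing happens in the degenerate collar --- is exactly Bernard's device (modify the Lagrangian/Hamiltonian off $\tcA$ so that $g_i$ becomes a global strict subsolution, then smooth); your write-up would be complete once you spell that step out rather than leaving it as an alternative.
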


\section{Reduction to a regular Lagrangian}
\label{sec:reduction}

Let $M=\T^d$ and assume the Aubry set $\tcA$ is the union of the
hyperbolic periodic orbits 
\[\Ga_i(t)=\phi_t(x_i,v_i,[0])=(\ga_i(t),\dga_i(t),[t]) \quad i\in[1,m]\]
of the Euler Lagrange flow with periods $N_i, i\in[1,m]$.
In this case the projected Aubry and Mather set coincide.
Let $N$ be the least common multiple of $N_1,\ldots,N_m$. Define
\begin{align}\label{Levanta}
P_N:\T^d\times\R^d\times\mS^1 & \to \T^d\times\R^d\times\mS^1\\
                  (x,v,[t])  & \mapsto (x,\frac vN,[Nt]),\nonumber
\end{align}
and the Lagrangian $L_N=L\circ P_N$. The corresponding Hamiltonian is
given by 
\[H_N(x,p,t)=H(x,Np,Nt).\]
For a curve $\ga:[a,b]\to\T^d$ define
$\ga^N:[a/N,b/N]\to\T^d, t\mapsto\ga(Nt)$, then 
$NA_{L_N}(\ga^N)=A_L(\ga)$.
A curve $\ga$ is an extremal (minmizer)  of $L$ if and
only if the curve $\ga^N$ is an extremal (minimizer) of $L_N$.

Let $\ga^N_{i,j}(t)=\ga_i(j+Nt),j\in[1,N_i],i\in[1,m]$.
According to sections 3, 5 of \cite{B1}, the Aubry set of $L_N$ is the
union of the hyperbolic 1-periodic orbits 
$\Ga_{i,j}^N(t)=(\ga^N_{i,j}(t)\dga^N_{i,j}(t),[t])$ and $L_N$ is regular. 
Observe that a function $u:\T^d\times\R\to\R$ is a viscosity solution of 
\eqref{hj} if and only if $w(x,t)=\frac 1Nv(x,Nt)$ is a viscosity
solution of
\[ w_t+H_N(x,d_xw,t)=0.\]
Thus our main Theorem is reduced to the case in which the Lagrangian is regular
and the Aubry set is the union of finite number of hyperbolic 1-periodic orbits.
 
Let $f:\T^{d+1}\to\R$ be a $C^k$ subsolution of \eqref{hj} strict
outside $\cA$. Consider the Lagrangian
\[\mL(x,v,t)=L(x,v,t)-d_xf(x,[t])v-f_t(x,[t])+c\]
with Hamiltonian $\mH(x,p,t)=H(x,p+d_xf,[t])+f_t(x,[t])-c$. 

If $\af\in\cC(x,s,y,t)$, $A_{\mL}(\af)=A_{L+c}(\af)+f(x,[s])-f(y,[t])$.
Thus $L$ and $\mL$  have the same Euler Lagrange flow and projected
Aubry set. Moreover,
\begin{equation}\label{mL} 
\forall (x,v,t)\; \mL(x,v,t)\ge 0,\;
\tilde\cA=\{(x,v,t):\mL(x,v,t)=0\}
\end{equation}
and $u$ is a viscosity solution of \eqref{hj}
if and only if $u-f$ is a viscosity solution of 
\[w_t+\mH(x,d_xw,t)=0.\]
We can therefore assume that $c=0$, $L$ is regular and has the property 
\eqref{mL} of $\mL$, and the Aubry set is the union of hyperbolic
1-periodic orbits $\Ga_i, i\in[1,m]$ of the Euler Lagrange flow.

Thus, for any $u\in C(\T^d,\R)$ we have  
\begin{equation}\label{eq:converge}
\lim_{k\to\infty}\cL_{\tau+k}u(x)=\ub(x,[\tau])
:=\min_{y\in\T^d}u(y)+h(y,[0],x,[\tau]).
\end{equation}
We get our result by proving that the convergence in \eqref{eq:converge} 
is exponentially fast.
The following Lemmas will be helpful
\begin{lemma}\label{union}
Let $W=\bigcup\limits_{i=1}^mW_i$ be a neighborhood of the Aubry set
in $\T^d\times\R^d\times\mS^1$.
Then, there exist $C>0$ such that if $\ga:[0,t]\to\T^d$,
$t>1$ is a minimizer, then the time that $(\ga,\dga,[\cdot])$
remains outside $W$ is less than $C$.
\end{lemma}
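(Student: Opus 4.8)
\emph{Strategy.} The plan is to bound the total action $A_L(\ga)$ of the minimizer by a constant depending only on $L$ (not on $t$, nor on the endpoints), and then to use that, by \eqref{mL}, $L\ge 0$ and $L$ vanishes exactly on $\tcA\subset W$: on the compact part of phase space that matters, $L$ is bounded below by some $\delta>0$ off $W$, so every unit of time that $(\ga,\dga,[\cdot])$ spends outside $W$ costs at least $\delta$ of action, which together with the action bound caps the excursion time.

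\emph{Step 1: a uniform action bound.} If $1<t\le 2$ the time outside $W$ is at most $2$, so I would assume $t\ge 2$. Since $F_{a,b}$ is continuous and, by time--periodicity of $L$, satisfies $F_{a+1,b+1}=F_{a,b}$, compactness of $\T^d$ gives $B:=\sup\{F_{a,a+1}(x,y):a\in\R,\ x,y\in\T^d\}<\infty$. Let $\ga_1$ be the projection of one of the $1$-periodic orbits in $\tcA$. I would compare $\ga$ with the competitor $\be\in\cC(\ga(0),0,\ga(t),t)$ that equals an $L$-minimizer from $\ga(0)$ to $\ga_1(1)$ on $[0,1]$, equals $\ga_1$ on $[1,t-1]$ (action $0$ by \eqref{mL}), and equals an $L$-minimizer from $\ga_1(t-1)$ to $\ga(t)$ on $[t-1,t]$; the pieces agree at the junctions $1$ and $t-1$, so $\be$ is admissible and $A_L(\ga)\le A_L(\be)\le B+0+B=2B$.

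\emph{Step 2: a priori compactness.} Superlinearity gives $C_1$ with $L(x,v,t)\ge|v|-C_1$ for all $(x,v,t)$. Then for each $\tau\in[0,t-1]$, using $L\ge 0$, $\int_\tau^{\tau+1}|\dga|\le\int_\tau^{\tau+1}(L+C_1)\le A_L(\ga)+C_1\le 2B+C_1$, so $|\dga(\sigma)|\le A_0:=2B+C_1$ for some $\sigma\in[\tau,\tau+1]$. By completeness of the Euler--Lagrange flow, $(\ga(s),\dga(s),[s])=\phi_{s-\sigma}(\ga(\sigma),\dga(\sigma),[\sigma])$ with $|s-\sigma|\le 1$ whenever $s\in[\tau,\tau+1]$, so there the lift lies in $\tilde K:=\{\phi_r(x,v,[s]):|r|\le 1,\ |v|\le A_0\}$, which is compact (continuous image of a compact set) and independent of $\ga$ and $t$. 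Letting $\tau$ range over $[0,t-1]$, the lift of $\ga$ stays in $\tilde K$ on all of $[0,t]$.

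\emph{Step 3: conclusion and main obstacle.} I would take $W$ open, so $\tilde K\setminus W$ is compact; it is disjoint from $\tcA=\{L=0\}$ since $\tcA\subset W$, whence $\delta:=\min\{L(x,v,s):(x,v,[s])\in\tilde K\setminus W\}>0$ (if $\tilde K\subset W$ there is nothing to prove). On the closed set $E=\{s\in[0,t]:(\ga(s),\dga(s),[s])\notin W\}$ one has $L(\ga(s),\dga(s),s)\ge\delta$, so $2B\ge A_L(\ga)\ge\int_E L\,ds\ge\delta\,|E|$, i.e. $|E|\le 2B/\delta$; the constant $C=\max\{2,2B/\delta\}$ works. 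The only steps with real content are the action bound and the a priori compactness; the thing to watch is uniformity: $B$ is uniform only because of continuity of $F_{a,b}$, periodicity, and compactness of $\T^d$, while $\tilde K$ must be chosen independently of the (arbitrarily long) interval $[0,t]$, which is exactly why the estimate is localized to unit subintervals before continuity of the flow is invoked.
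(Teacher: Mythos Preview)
Your proof is correct and follows the same two--step skeleton as the paper's argument: (i) the action of any minimizer on $[0,t]$ is bounded above independently of $t$, and (ii) $L\ge 0$ with $\{L=0\}=\tcA$ forces $L\ge\eta>0$ off $W$, so the excursion time is at most $A_L(\ga)/\eta$. The paper states both steps in one line each and omits the details you supply in Steps~1 and~3.

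Your Step~2, while correct, is not needed for this lemma. Superlinearity already gives a \emph{global} positive lower bound for $L$ on $(\T^d\times\R^d\times\mS^1)\setminus W$: since $L(x,v,t)\ge |v|-C_1$, one has $L\ge 1$ once $|v|>C_1+1$, and on the compact set $\{|v|\le C_1+1\}\setminus W$ continuity and $\{L=0\}=\tcA\subset W$ give $L\ge\eta_0>0$; take $\eta=\min(1,\eta_0)$. So you can pass directly from Step~1 to Step~3 without confining the lift to a compact $\tilde K$. That said, the a~priori velocity bound you derive is exactly what the paper invokes later (in Lemma~\ref{single}), so the effort is not wasted.
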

\begin{proof}
Since the Lagrangian is non-negative and it vanishes only on the Aubry
set, outside the neighborhood $W$ it is bounded from below by $\eta>0$. 
For $t>1$ the action of minimizers $\ga:[0,t]\to\T^d$ is bounded from
above independently of $t$. The Lemma follows. 
\end{proof}
\begin{lemma}\label{single}
Let $V=\bigcup\limits_{i=1}^mV_i$ be a neighborhood of the Aubry set
in $\T^d\times\R^d\times\mS^1$.
There is $N=N(V)\in\N$ such that if
$\ga:[0,t]\to\T^d, t>1$ is minimizer, then $(\ga,\dga,[\cdot])$ stays in one
$V_i$ during an interval larger than $\dfrac{t}{N}-1$.    
\end{lemma}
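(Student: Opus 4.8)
The plan is to refine Lemma \ref{union}, which only controls the \emph{total} time a minimizer spends away from $\tcA$, into a statement that pins the minimizer near a single periodic orbit for a long \emph{consecutive} stretch of time. First I would fix an auxiliary neighbourhood $W=\bigcup_{i=1}^m W_i$ of the Aubry set with the closures $\overline{W_i}$ contained in the respective $V_i$, pairwise disjoint, and mutually far apart (possible because the orbits $\Ga_i$ are disjoint compact subsets of $\T^d\times\R^d\times\mS^1$); Lemma \ref{union} applied to $W$ gives $C>0$ bounding the time every minimizer $\ga:[0,t]\to\T^d$, $t>1$, spends with $(\ga,\dga,[\cdot])$ outside $W$. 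I would also record the a priori compactness of these minimizers: there is $A>0$ with $|\dga|\le A$ throughout — this uses the uniform upper bound on the action of minimizers with $t>1$ already exploited in the proof of Lemma \ref{union}, together with superlinearity — and then, solving for $\ddot\ga$ in the Euler--Lagrange equation over the compact set $\{|v|\le A\}$, a bound $|\ddot\ga|\le A'$; hence the phase-space curve $s\mapsto(\ga(s),\dga(s),[s])$ is $\Lambda$-Lipschitz with $\Lambda=\sqrt{A^2+A'^2+1}$.

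The core step is an excursion estimate: there is $\tau\in(0,1]$ such that if $[\sigma,\sigma']\subset[0,t]$, $(\ga,\dga,[\cdot])(\sigma)\in W_i$, and the curve spends less than time $\tau$ outside $W$ on $[\sigma,\sigma']$, then $(\ga,\dga,[\cdot])([\sigma,\sigma'])\subset V_i$. Indeed the distance to $\overline{W_i}$ is $1$-Lipschitz, hence $\Lambda$-Lipschitz along the curve; if the curve left $V_i$ on $[\sigma,\sigma']$ this quantity would grow from $0$ to at least the width of the collar of $\overline{W_i}$ inside $V_i$, so the curve would have to traverse an annular region of definite width which, by the separation of the $\overline{W_j}$, lies entirely outside $W$ — and traversing it takes at least a fixed amount of time, which contradicts the hypothesis once $\tau$ is chosen below that amount. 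Granting this, I would cut $[0,t]$ at the times where the accumulated time spent outside $W$ reaches successive multiples of $\tau/2$; since that total is less than $C$ this yields at most $q:=\lfloor 2C/\tau\rfloor+1$ pieces. On each piece, after deleting an initial sub-interval on which the curve stays outside $W$ (the deleted bits together have length less than $C$), what remains is an interval anchored at a point of $W$, hence contained in a single $V_i$ by the excursion estimate. These $V_i$-intervals have total length greater than $t-C$, so one of them has length greater than $(t-C)/q$. Taking $N=N(V):=q$ and noting $q>C$ because $\tau\le1$, we obtain $(t-C)/q\ge t/N-1$, which is the claim whenever the curve meets $W$, in particular whenever $t>C$; for the remaining bounded range $1<t\le C$ the bound $t/N-1$ is negative, so there is nothing to check.

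The main obstacle is the excursion estimate, together with arranging the intermediate neighbourhood $W$ correctly. A priori a minimizer could leave some $W_i$ and return to it arbitrarily many times, and could drift between components, so neither a bound on the number of returns nor the localization near one orbit is available from Lemma \ref{union} by itself; what makes the argument go through is that a \emph{short} excursion cannot reach another component or even exit $V_i$, and this is exactly where the uniform Lipschitz bound — hence the a priori compactness of long minimizers — is indispensable, together with having taken the $\overline{W_i}$ well inside the $V_i$ and mutually far apart. Everything after the excursion estimate is an elementary covering count; the only genuinely analytic inputs are the uniform action bound for minimizers with $t>1$ and the resulting a priori bounds on $|\dga|$ and $|\ddot\ga|$.
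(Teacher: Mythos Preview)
Your proof is correct and follows essentially the same strategy as the paper's: pass to an inner neighbourhood $W\subset V$, apply Lemma~\ref{union} to $W$, and use a uniform speed bound on the phase-space lift $(\ga,\dga,[\cdot])$ of a minimizer to bound the number of crossings of the collar between $W$ and the complement of $V$, after which a pigeonhole yields a long sub-interval trapped in a single $V_i$. Your write-up is more explicit than the paper's terse version---you bound $|\ddot\ga|$ to control the phase-space speed and you spell out the partition and the final arithmetic---but the underlying idea is identical.
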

\begin{proof}
Let $\de\in]0,1[$ be such that the $\de$-neighborhood of $\Ga_i$ is
contained in $V_i$ for $i\in[1,m]$. 
Let $W_i$ be the $\dfrac\de 2$-neighborhood of $\Ga_i$ and
apply Lemma \ref{union} to $W=\bigcup\limits_{i=1}^mW_i$.
Since the velocity of any minimizer $\ga:[0,t]\to\T^d, t>1$  
is bounded by a constant $\ge 1$, the number of times it can go from $W$ to
the complement of $V$ is bounded by some $N\in\N$. The Lemma follows. 
\end{proof}
Let $\lambda_{i,j},j=1,\ldots,d$ be the positive Lyapunov exponents of
$\ga_i$, set $\lambda<\min\limits_{i,j}\lambda_{ij}$ and write
$\phi_t(x,v,[0])=\psi_t(x,v)$. There is a splitting
$\R^{2d}=E_i^-\oplus E_i^+$, invariant under $D_i=d\psi_1(x_i,v_i)$,
and a norm $\|\cdot\|$ such that $\|D_i^{\pm 1}|E_i^\mp\|\le e^{-\lam}$. 
\begin{proposition}[\cite{BV}, \cite{Be}]\label{conjugacy}
There are $\af,\rho\in (0,1)$, neighborhoods $A_i$ of $(x_i,v_i)$ in
$\T^d\times\R^d$, and $\af$-H\"older maps $g_i:A_i\to B_{2\rho}(0)\subset\R^{2d}$ 
with $\af$-H\"older inverse such that 
\[D_i\circ g_i=g_i\circ\psi_1.\]
\end{proposition}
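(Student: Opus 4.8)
The plan is to obtain this as the H\"older version of the Grobman--Hartman theorem applied to the time-one map $\psi_1$ near each of its hyperbolic fixed points $(x_i,v_i)$.

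First I would set up the local normal form. Since $L$ is $C^k$ with $k\ge2$, the Euler--Lagrange flow, and hence $\psi_1$, is $C^{k-1}$; because $\Ga_i$ is a hyperbolic $1$-periodic orbit, $(x_i,v_i)$ is a hyperbolic fixed point of $\psi_1$ with derivative $D_i$ and the already-fixed invariant splitting $\R^{2d}=E_i^-\oplus E_i^+$, $\|D_i^{\pm1}|E_i^\mp\|\le e^{-\lam}$. In linear coordinates on $\R^{2d}$ adapted to this splitting and centred at $(x_i,v_i)$ the map reads $F(z)=D_iz+R_i(z)$, with $R_i$ of class $C^{k-1}$, $R_i(0)=0$, $dR_i(0)=0$. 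To make this a globally defined small perturbation of $D_i$ I would multiply $R_i$ by a bump function $\chi$ equal to $1$ on $B_\rho(0)$ and supported in $B_{2\rho}(0)$: for $\rho$ small the resulting map $\tilde F=D_i+\chi R_i$ is a homeomorphism of $\R^{2d}$ that agrees with $F$ on $B_\rho(0)$ and whose nonlinear part has arbitrarily small $C^1$ norm.

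Next I would invoke the H\"older Grobman--Hartman theorem of \cite{BV} (see also \cite{Be}): there is a homeomorphism $h_i=\mathrm{id}+\fui_i$ of $\R^{2d}$ with $\fui_i$ bounded, satisfying $D_i\circ h_i=h_i\circ\tilde F$, such that $h_i$ and $h_i^{-1}$ are $\af_i$-H\"older for some $\af_i\in(0,1)$ depending only on $D_i$ --- essentially on the ratio of $\lam$ to $\log\max\{\|D_i\|,\|D_i^{-1}\|\}$. Since $\|\fui_i\|_\infty\to0$ as the $C^1$ size of the perturbation does, after shrinking $\rho$ we get $h_i(B_\rho(0))\subset B_{2\rho}(0)$. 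I would then put $\af=\min_{1\le i\le m}\af_i\in(0,1)$, let $A_i$ be the neighbourhood of $(x_i,v_i)$ corresponding in these coordinates to $B_\rho(0)$, and take $g_i$ to be $h_i$ written back in the original coordinates and restricted to $A_i$. On $A_i$ one has $\tilde F=F$, so $g_i:A_i\to B_{2\rho}(0)$ is $\af$-H\"older with $\af$-H\"older inverse and conjugates $\psi_1$ to $D_i$, i.e.\ $D_i\circ g_i=g_i\circ\psi_1$ wherever both sides are defined, which is the statement.

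The only nonroutine point is the H\"older regularity of $h_i$ and its inverse. The conjugacy itself is produced by solving the cohomological equation $D_i\fui-\fui\circ\tilde F=\chi R_i$ for a bounded continuous $\fui$, splitting $\fui$ along $E_i^\mp$ and using the contractions of $D_i$ and $D_i^{-1}$ on those subspaces; this classical argument gives only continuity. Upgrading it to a H\"older bound requires iterating the equation and balancing the exponential contraction along $E_i^\mp$ against the possibly larger expansion measured by $\|D_i^{\pm1}\|$, the gain of a power $\af<1$ being exactly what makes the telescoping geometric series converge. Since this is carried out in \cite{BV} and \cite{Be}, I would cite it rather than reproduce it.
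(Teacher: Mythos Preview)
Your sketch is a correct outline of the H\"older Grobman--Hartman theorem and matches what is done in the cited references \cite{BV}, \cite{Be}. Note that the paper does not supply its own proof of this proposition: it is stated with attribution to \cite{BV} and \cite{Be} and used as a black box, so there is nothing to compare beyond observing that your argument is the standard one those references carry out.
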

Set
\begin{equation}
  \label{eq:vecindad}
U_i=g_i^{-1}(B_\rho),\quad
V_i=\bigcup_{s\in[0,1]}(\psi_s(U_i)\cap\psi_{s-1}(U_i))\times[s],
\quad V=\bigcup_{i=1}^mV_i.
\end{equation}

\section{Proof of the result}
\label{sec:proof}
For brevity, for $x\in\T^d$ we write $\bar x=(x,[0])$, 
and for $\be:I\to\T^d$ we write $\beb(t)=(\be(t),[t])$,  
$d\be(t)=(\be(t),\dbe(t))$, $B(t)=(\be(t),\dbe(t),[t])$.

For each $z\in\T^{d+1}$ let $y_z$ be such that 
\[\ub(z)=u(y_z)+h(\yb_z,z)\] 
In particular let $y_j=y_{\xb_j}$. 
Since $z\mapsto -h(z,\xb_j)\in\cS^+$, there is a semistatic curve
$\be^j:[0,\infty[\to\T^d$ such that  $\be^j(0)=y_j$ and
\[ A_L(\be^j|[0,t])=h(\yb_j,\xb_j)-h(\beb^j(t),\xb_j),\quad t>0.\]
We can take $\rho>0$ given in Proposition \ref{conjugacy} such that
\[\{B^j(0):j\in[1,m]\}\cap V-\tcA=\emptyset\]

Fix $j\in[1,m]$, set $\be_0=\be^j$ and let $\Ga_{k_1}$ be the
$\om$-limit of $B_0=B^j$. 
If $k_1=j$ we stop, otherwise we observe that by the regularity of $L$
\[h(\beb_0(t),\xb_j)=h(\beb_0(t),\xb_{k_1})+h(\xb_{k_1},\xb_j),\quad t\ge 0 \]
and then
\[A_L(\be_0|[0,t])=h(\yb_j,\xb_{k_1})-h(\beb_0(t),\xb_{k_1}),\quad
t>0.\]

\begin{proposition}\label{cadena}
If $k_1\ne j$, there are $k_1,\ldots,k_l=j$ all different  
and semistatic curves $\be_r:\R\to\T^d$,
$r< l$ such that $\Ga_{k_r}$ and $\Ga_{k_{r+1}}$ are the
$\af$ and $\om$ limits of $B_r$,
\[h(\xb_{k_1},\xb_j)=\sum_{r=1}^{l-1}h(\xb_{k_r},\xb_{k_{r+1}}),\]
\[A_L(\be_r|_{[t,s]})=h(\xb_{k_r},\beb_r(s))-h(\xb_{k_r},\beb_r(t)),\quad
t\le s.\]
\end{proposition}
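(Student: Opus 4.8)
The plan is to build the chain $k_1, k_2, \dots$ inductively by following semistatic curves between successive members of the Aubry set, using the additivity of the Peierls barrier across the Aubry components as the bookkeeping device. We already have $\be_0 = \be^j$ with $\om$-limit $\Ga_{k_1}$, and the identity $A_L(\be_0|[0,t]) = h(\yb_j,\xb_{k_1}) - h(\beb_0(t),\xb_{k_1})$ for $t>0$. The first step is to produce $\be_1$. Since $(x,[t])\mapsto -h(x,[t],\xb_{k_1})$ is a forward solution and $(x,[t])\mapsto h(\xb_{k_1},x,[t])$ is a backward solution, and these agree on $\cA \supset \Ga_{k_1}$, the curve $\be_1$ should be chosen as a semistatic curve defined on all of $\R$ whose $\af$-limit is $\Ga_{k_1}$: run a calibrating curve for $h(\xb_{k_1},\cdot)$ backward from a point on $\Ga_{k_1}$ and forward to wherever it goes. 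Concretely, pick a point $\bar z$ on $\Ga_{k_1}$, use calibration of the backward solution $h(\xb_{k_1},\cdot)$ to get $\be_1:[0,\infty[\to\T^d$ calibrated and limiting forward to some $\Ga_{k_2}$, and extend backward along $\Ga_{k_1}$ itself (or along a calibrating curve asymptotic to it) so that the $\af$-limit is exactly $\Ga_{k_1}$; the calibration identity $A_L(\be_1|_{[t,s]}) = h(\xb_{k_1},\beb_1(s)) - h(\xb_{k_1},\beb_1(t))$ then holds on $\R$.

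Next I would establish the additivity relation that forces the chain to terminate. At each stage, regularity of $L$ gives $h(\xb_{k_r}, x,[t]) = h(\xb_{k_r},\xb_{k_{r+1}}) + h(\xb_{k_{r+1}}, x,[t])$ for any $(x,[t])$, because an almost-minimizing curve from $\xb_{k_r}$ to $(x,[t])$ of large integer length can be made to pass near $\Ga_{k_{r+1}}$ (it is the $\om$-limit of the semistatic curve $\be_r$); this is exactly the triangle-inequality-with-equality statement for Peierls barriers along a connecting orbit, and it is the analogue of the displayed identity already used for $\be_0$. Iterating, $h(\xb_{k_1},\xb_j) = \sum_{r=1}^{l-1} h(\xb_{k_r},\xb_{k_{r+1}})$ once we know the chain reaches $j$ with $k_l = j$. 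To see the $k_r$ are all distinct and the process stops, note that on the Aubry set $h(\xb_{k},\xb_{k}) = 0$ while $h(\xb_{k},\xb_{k'}) + h(\xb_{k'},\xb_{k}) > 0$ for $k\ne k'$ in different components (otherwise the two orbits would lie in a common static class, contradicting that the Aubry set decomposes into the distinct hyperbolic orbits $\Ga_i$, each its own static class). Hence if some index repeated, say $k_a = k_b$ with $a<b$, summing the barriers around the loop would give $0 = \sum_{r=a}^{b-1} h(\xb_{k_r},\xb_{k_{r+1}}) \ge h(\xb_{k_a},\xb_{k_{a+1}}) + h(\xb_{k_{a+1}},\xb_{k_a}) > 0$ (using additivity to collapse the remaining terms), a contradiction. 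Since there are only finitely many components, the chain must close up, and the only way it closes consistently is with $k_l = j$ — because by construction the whole chain was produced while computing $\ub(\xb_j) = u(y_j) + h(\yb_j,\xb_j)$, and the minimizing curve realizing $\ub$ at $\xb_j$ is semistatic with $\om$-limit some $\Ga_{k}$; chasing the barrier identities back, consistency of $h(\yb_j,\xb_j) = h(\yb_j,\xb_{k_1}) + h(\xb_{k_1},\xb_j)$ together with $h(\xb_{k_1},\xb_j)$ decomposing along the chain forces the chain to end at $j$.

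The main obstacle I expect is the termination/distinctness argument — specifically, verifying rigorously that $h(\xb_{k},\xb_{k'}) + h(\xb_{k'},\xb_{k}) > 0$ whenever $\Ga_k \ne \Ga_{k'}$, and that the chain built from $\om$-limits actually returns to $j$ rather than cycling among other components. The positivity is essentially the statement that distinct hyperbolic periodic orbits lie in distinct static classes of the Mañé/Aubry decomposition; since each $\Ga_i$ is a single hyperbolic periodic orbit and the projected Aubry and Mather sets coincide here, this should follow from the fact that a closed curve with zero $L$-action must lie entirely in $\tcA$ (property \eqref{mL}) and, being a single orbit segment, must coincide with one $\Ga_i$. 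Making the "chain returns to $j$" step airtight requires being careful that the curve $\be_r$ can always be taken with $\af$-limit equal to the previously reached orbit (not merely accumulating on the Aubry set somewhere), which is where Proposition \ref{conjugacy} and the neighborhoods $V_i$ enter: near each $\Ga_i$ the dynamics is conjugate to a linear hyperbolic map, so a calibrating orbit that enters $V_i$ and is backward-asymptotic to the Aubry set must in fact be backward-asymptotic to $\Ga_i$ along its stable manifold. I would isolate that local claim and invoke it at each inductive step.
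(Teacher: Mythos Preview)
Your proposal has a genuine gap in the construction of the curves $\be_r$, and this gap is precisely the obstacle you flag at the end but do not resolve. You propose to obtain $\be_1$ by ``picking a point on $\Ga_{k_1}$'' and running a calibrating curve for $h(\xb_{k_1},\cdot)$ forward. But $h(\xb_{k_1},\cdot)$ is a \emph{backward} solution, so it only guarantees calibrating curves on intervals $]-\infty,\tau]$; and if you instead calibrate the forward solution $-h(\cdot,\xb_j)$ from a point of $\Ga_{k_1}$, the periodic orbit $\ga_{k_1}$ itself is a perfectly good calibrating curve (it has zero action), so nothing forces $\be_1$ to leave $\Ga_{k_1}$. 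More seriously, even if you could produce some heteroclinic $\be_1$ calibrating $h(\xb_{k_1},\cdot)$ with $\alpha$-limit $\Ga_{k_1}$ and $\omega$-limit $\Ga_{k_2}$, your construction carries no information about $\xb_j$: you never obtain the identity $h(\xb_{k_1},\beb_1(t))+h(\beb_1(t),\xb_j)=h(\xb_{k_1},\xb_j)$, and without it the chain has no reason to terminate at $j$. Your distinctness argument rules out cycles, but with only $m$ orbits and no repetitions the process simply cannot continue forever, and you have given no mechanism selecting $j$ as the endpoint.

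The paper resolves this by constructing $\be_1$ differently: take an action-minimizer $\rho_n:[0,n]\to\T^d$ from $x_{k_1}$ to $x_j$, show (via a gluing argument with $\ga_{k_1}|_{[-1,0]}$) that $\dot\rho_n(0)\to\dga_{k_1}(0)$ so the first exit time $t_n$ from a neighborhood $U_{k_1}$ tends to infinity, recenter $\af_n(t)=\rho_n(t+\lb t_n\rb)$, and pass to a subsequential limit $\be_1$. Because the minimizers genuinely end at $x_j$, the uniform convergence of $F_{a,b}$ (regularity of $L$) yields both calibration identities simultaneously: $h(\xb_{k_1},\beb_1(s))-h(\xb_{k_1},\beb_1(t))=A_L(\be_1|_{[t,s]})$ \emph{and} $h(\xb_{k_1},\beb_1(t))+h(\beb_1(t),\xb_j)=h(\xb_{k_1},\xb_j)$. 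The second identity is what drives the induction toward $j$ and makes the distinctness argument (which is essentially yours) conclude with $k_l=j$. The missing idea in your proposal is this recentered-limit-of-minimizers construction; the hyperbolic local chart from Proposition~\ref{conjugacy} is not used here at all.
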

\begin{proof}
There is a neighborhood $U'_i$of $\gab_i$ where $z\mapsto h(\xb_i,z)$
is $C^k$ and the local weak unstable manifold of $\Ga^*_i$ is the graph
of $d_xh(\xb_i,x,[t])$. Let $U_i$ be a neighborhood of $\gab_i$  
with compact $\bar U_i\subset U'_i$.
Let $\rho_n:[0,n]\to\T^d$ be a curve joining $x_{k_1}$ to
$x_j$ such that
\[A_L(\rho_n)=F_{0,n}(x_i,x_j).\]
Let $t_n\in[0,n]$ be the first exit time of $\gab_n(t)$ out of $U_i$, and
$\gab_n(t_n)$ be the first point
of intersection with $\partial U_{k_1}$. As $n$ goes to infinity,
$t_n$ and $n-t_n$ tend to infinity. This follows from the fact that
$\dot\rho_n(0)$ has to tend to $\dga_{k_1}(0)$, and
$\dot\rho_n(n)$ has to tend to $\dga_j(0)$. To justify this, 
consider $v$ a limit point of $\dot\rho_n(0)$, and $\ga:\R\to\T^d$
the solution to the Euler-Lagrange equation such that
$\ga(0)=x_i,\dga(t)=v$. From the fact that
\[F_{0,n}(x_{k_1},x_j)-F_{1,n}(\rho_n(1),x_j)=A_L(\rho_n|_{[0,1]})\]
and the regularity of $L$, taking limit $n\to \infty$ it follows  
\[h(\xb_{k_1},\xb_j)-h(\gab(1),\xb_j)=A_L(\ga|_{[0,1]}).\]
Since $\ga_{k_1}(-1)=x_{k_1}$ and $L=0$ on $\tcA$
\[h(\gab_{k_1}(-1),\xb_j)-h(\gab(1),\xb_j)=
A_L(\ga_{k_1}|_{[-1,0]})+A_L(\ga_{[0,1]})\]
so that the curve obtained by gluing $\ga_{k_1}|_{[-1,0]}$ with
$\ga|_{[0,1]}$ minimizes the action between its endpoints. In
particular, it has to be differentiable, thus $v=\dga(0)=\dga_{k_1}(0)$.
Define $\af_n:[-\lb t_n\rb,n-\lb t_n\rb]\to\T^d$ by 
$\af_n(t)=\rho_n(t+\lb t_n\rb)$ and let $(y,w,\tau)$ be a cluster point
of $(d\rho_n(t_n),t_n-\lb t_n\rb)$. 
Then there is a sequence $(\af_{n_l})$ converging uniformly on compact
intervals to the solution $\be_1:\R\to\T^d$ of the Euler-Lagrange equation
such that $d\be_1(\tau)=(y,w)$. Since for any $t\le s$ we have
\begin{align*}
F_{-\lb t_n\rb,s}(x_{k_1},\af_n(s))-F_{-\lb t_n\rb,t}(\xb_{k_1},\af_n(t))
&=A_L(\af_n|_{[t,s]}),\\ 
F_{-\lb t_n\rb,t} (x_{k_1},\af_n(t))+F_{t,n-\lb t_n\rb}(\af_n(t),x_j)
&=F_{0,n}(x_{k_1},x_j),
\end{align*}
from the uniform convergence of $F_{a,b}$ when $\lb b-a\rb\to\infty$, 
we obtain for any $t\le s$  
\begin{align*}
  h(\xb_{k_1},\beb_1(s))-h(\xb_{k_1},\beb_1(t))&=A_L(\be_1|_{[t,s]})\\
h(\xb_{k_1},\beb_1(t))+h(\beb_1(t),\xb_j)&= h(\xb_{k_1},\xb_j).
\end{align*}
Since $\afb_n([-\lb t_n\rb,t_n-\lb t_n\rb])\subset\bar U_{k_1}$ we
have that $\Ga_{k_1}$ is the $\af$-limit of $B_1$ and
let $\Ga_{k_2}$ be its $\om$-limit. 
 If $k_2=j$ we stop, otherwise we observe that
\begin{align*}
  h(\xb_{k_1},\xb_{k_2})+h(\xb_{k_2},\xb_j)&= h(\xb_{k_1},\xb_j),\\
h(\xb_{k_1},\beb_1(t))+h(\beb_1(t),\xb_{k_2})&= h(\xb_{k_1},\xb_{k_2}).
\end{align*}
Therefore $k_1\ne k_2$. We proceed in the same way to find a solution
to the Euler-Lagrange equation  $\be_2:\R\to\T^d$ such that
$\Ga_{k_2}$ is the $\af$-limit of  $B_2$ and
for any $t,s\in\R$, $t<s$   
\begin{align*}
  h(\xb_{k_2},\beb_2(s))-h(\xb_{k_2},\beb_2(t))&=A_L(\be_2|_{[t,s]})\\
h(\xb_{k_2},\beb_2(t))+h(\beb_2(t),\xb_j)&= h(\xb_{k_2},\xb_j).
\end{align*}
Let $\Ga_{k_3}$ be the $\om$-limit of $B_2$.
If $k_3=j$ we stop, otherwise we observe that
\begin{align*}
  h(\xb_{k_2},\xb_{k_3})+h(\xb_{k_3},\xb_j)&= h(\xb_{k_2},\xb_j),\\
h(\xb_{k_2},\beb_2(t))+h(\beb_2(t),\xb_{k_3})&= h(\xb_{k_2},\xb_{k_3}).
\end{align*}
Therefore $k_2\ne k_3$. Since
\[h(\xb_{k_1},\xb_{k_2})+h(\xb_{k_2},\xb_{k_3})= h(\xb_{k_1},\xb_{k_3}),\]
we have that $k_1\ne k_3$. We continue until we get $k_r=j$.
\end{proof}
Let $V$ be given by \eqref{eq:vecindad}. 
There is $T>0$ such that for $t\ge T$, 
we have $B_r(t), B_r(-t)\in V$ and then
\begin{eqnarray*}
  d(d\be_r(t),d\ga_{k_{r+1}}(t))\le C_1e^{-\lam t},
&&t>T\\
d(d\be_r(t),d\ga_{k_r}(t))\le C_1e^{\lam t},
&&t<-T.
\end{eqnarray*}
For $(x,\tau)\in\T^d\times[0,1]$, we consider a directed graph with
vertices at the points $\xb_1,\cdots,\xb_m$ in the Aubry set, and a
directed segment from $\xb_j$ to $\xb_k$ if and only if
\[h(\xb_k,x,[\tau])=h(\xb_k,\xb_j)+h(\xb_j,x,[\tau]).\]
We call a point $\xb_k$ a root of this graph if there is no segment
arriving to this point and this means that for $j\ne k$
\[h(\xb_k,x,[\tau])<h(\xb_k,\xb_j)+h(\xb_j,x,[\tau]).\]
Notice that the graph contains no cycles, and so each point $\xb_k$
belongs to a branch starting at a root.  
If $\ub(x,[\tau])=\ub(\xb_k)+h(\xb_k,x,[\tau])$ and there is a
segment from $x_j$ to $x_k$, then
\begin{align*}
 \ub(x,[\tau])\le\ub(\xb_j)+h(\xb_j,x,[\tau])
&\le\ub(\xb_k)+h(\xb_k,\xb_j)+h(\xb_j,x,[\tau])\\
&=\ub(\xb_k)+h(\xb_k,x,[\tau])=\ub(x,[\tau]).
\end{align*}
Take $k\in[1, m]$ such that
\[\ub(x,[\tau])=\ub(\xb_k)+h(\xb_k,x,[\tau])\]
and let $\xb_j$ be the root of a branch containing $\xb_k$.
Since $z\mapsto h(\xb_j,z)\in\cS^-$, 
there is a semistatic curve $\be_{x,\tau}:]-\infty,\tau]\to\T^d$ with
$\be_{x,\tau}(\tau)=x$ such that
\[A_L(\be_{x,\tau}|[t,\tau])=
h(\xb_j,x,[\tau])-h(\xb_j,\beb_{x,\tau}(t)),\quad t<\tau.\] 
Let $\Ga_i$ be the $\af$-limit of $B_{x,\tau}$.
If $(x,[\tau])=\gab_j(\tau)$ then $\be_{x,\tau}=\ga_i=\ga_j$.
By the regularity of $L$
\[h(\xb_j,x,[\tau])=h(\xb_j,\xb_i)+h(\xb_i,x,[\tau]),\] 
since $\xb_j$ is a root, $i=j$.

From Lemma \ref{union} there is $\bar T\ge T$ such that 
$B_{x,\tau}(t)\in V_j$ for $t\le-\bar T$ 
\[d(d\be_{x,\tau}(t),d\ga_j(t))\le C_1e^{\lam t},\]
unless  $B_{x,\tau}$ is part of an orbit of the Euler-Lagrange 
flow with $\om$-limit some $\Ga_i$ and $B_{x,\tau}(\tau)\in V_i$.
If the last situation occurs let 
$M=\max\{q\in\N:B_{x,\tau}([-q,\tau])\subset V_i\}$ so that
$d(d\be_{x,\tau}(t),d\ga_i(t))\le C_2e^{-\lam(M+t)}.$
Otherwise take $M=0$.

For $k\in\N$ let $n+1=\lb\dfrac k{2(l+1)}\rb$ and define
the curve $\af_k:[0,\tau+k]\to\T^d$ by
\[\af_k(s)=
\begin{cases}
 \be_0(s) & s\in[0,n]\\
c_r(s-r(2n+1)+n+1)& s\in[r(2n+1)-n-1,r(2n+1)-n], r\le l\\
\be_r(s-r(2n+1)) & s\in[r(2n+1)-n,(r+1)(2n+1)-n-1]\\
\ga_j(s) & s\in[l(2n+1)-n,k-2n-1]\\
c_{l+1}(s-k+2n+1) & s\in[k-2n-1,k-2n]\\
\be_{x,\tau}(s-k-(M-n)_+) & s\in[k-2n,\tau+k+(M-n)_+]
\end{cases}\]
where $c_r:[0,1]\to\T^d=\R^d/\Z^d$ is defined for
$n$ large by
\[c_r(s)=\begin{cases}
(1-s)\be_{r-1}(s+n)+s\be_r(s-1-n)& r< l\\
(1-s)\be_l(s+n)+s\ga_j(s)&r=l\\
(1-s)\ga_j(s)+s\be_{x,\tau}(s-1-n-M)&r=l+1.
\end{cases}\]
Notice that if $(x,\tau)=\ga_j(\tau)$ then $\af_k(s)=\ga_j(s)$
for $s\ge l(2n+1)-n$.

Since $L\ge 0$, when $n\ge M$ we have
\begin{align*}
A_L(\af_k)&=A_L(\be_0|_{[0,n]})+\sum_{r=1}^{l-1}A_L(\be_r|_{[-n,n]})
+A_L(\be_{x,\tau}|_{[-2n,\tau]})+\sum_{r=1}^{l+1}A_L(c_r)\\
&\le h(\yb_j,\xb_{k_1})+\sum_{r=1}^{l-1}h(\xb_{k_r},\xb_{k_{r+1}})
+h(\xb_j,(x,[\tau]))+\sum_{r=1}^{l+1}A_L(c_r)\\   
&= h(\yb_j,\xb_j)+h(\xb_j,(x,[\tau]))+\sum_{r=1}^{l+1}A_L(c_r)
\end{align*}
Thus
\[\cL_{\tau+k}u(x)-\ub(x,[\tau])\le\sum_{r=1}^{l+1}A_L(c_r)\]
Since $L=0$ on $\tcA$ and there is $C_3\ge C_2$ with
\begin{equation}\label{d(c,ga)}
  d(dc_r(s),d\ga_{k_r}(s)),\; d(dc_{l+1}(s),d\ga_j(s))\le C_3e^{-\lam n}
\end{equation}
we have
\[\cL_{\tau+k}u(x)-\ub(x,[\tau])\le C_4e^{-\lam n}\le
C_5e^{-\lam k/2m}.\]
When $M>n$ define
\begin{align*}
c_{l+2}(s)&=-s\be_{x,\tau}(s+\tau+n-M)+(1+s)\be_{x,\tau}(s+\tau) \quad s\in[-1,0]\\
\hat\af_k(s)&=c_{l+2}(s-k-\tau)\quad s\in[k+\tau-1,k+\tau].
\end{align*}
Since $L\ge 0$ we have as before 
\[A_L(\af_k|[0,k+\tau-1])+A_L(\hat\af_k)\le
h(\yb_j,\xb_j)+h(\xb_j,(x,[\tau]))+\sum_{r=1}^{l+2}A_L(c_r)\] 
Since $L=0$ on $\tcA$, \eqref{d(c,ga)} and
$d(dc_{l+2}(s),d\ga_i(s))\le C_3e^{-\lam n}$ 
we have
\[\cL_{\tau+k}u(x)-\ub(x,[\tau])\le C_4e^{-\lam n}\le
C_5e^{-\lam k/2m}.\]
\begin{remark}
The points $\yb_i$ depend on the function $u$. To get a constant $C_4$ 
independent of $u$ we should manage situations similar to the case $M>0$
above for a continuum of points.
\end{remark}

We now establish the opposite inequality.

For $x\in M$, $t>0$ let  $\ga=\ga_{x,t}:[0,t]\to\T^d$ be such that
$\ga(t)=x$ and
\[\cL_tu(x)=u(\ga(0))+A_L(\ga_t)
=u(\ga(0))+F_{0,t}(\gab(0),x,[t]).\]
For any integer $j\in[0,t]$, $i\in[1,m]$ we have
\begin{align*}
  \ub(x,[t])&\le \ub(\xb_i)+h(\xb_i,x,[t])\\
&\le u(\ga(0))+h(\gab(0),x_i)+h(\xb_i,x,[t]) \\
&\le u(\ga(0))+\Phi(\gab(0),\gab(j))+h(\gab(j),\xb_i)+
h(\xb_i,\gab(j))+\Phi(\gab(j),x,[t])\\ 
&\le u(\ga(0))+A_L(\ga)+h(\gab(j),\xb_i)+h(\xb_i,\gab(j))\\
&=\cL_tu(x)+h(\gab(j),\xb_i)+h(\xb_i,\gab(j))\\
&\le \cL_tu(x)+ K d(\ga(j),x_i).
\end{align*}
Apply Lemma \ref{single} to $V$ given by \eqref{eq:vecindad}.
If $\ga:[0,t]\to\T^d$ is a minimizer and $n+1=\lb\dfrac t{2N}\rb$,
$N=N(V)$, then  $(\ga,\dga,[\cdot])$ stays in one $V_i$ on an interval
$I$ of lenght $2n+1$. 
Let $j\in\N$ be such that $[j,j+2n]\subset I$. Letting
$Y_l=(\ga(j+l),\dga(j+l))$ and writting
\[ g_i(Y_0)=z_-+z_+,\; g_i(Y_{2n})=w_-+w_+,\;z_\pm,w_\pm\in E_i^\pm\] 
we have
\begin{align*}
  g_i(Y_n)&= 
D_i^n(z_-)+ D_i^{-n}(w_+),\\
\|g_i(Y_n)\|&\le e^{-\lam n}(\|z_-\|+\|w_+\|),\\
d(\ga(j+n),x_i)&\le d(Y_n,(x_i,v_i))\le C_5 e^{-\af\lam n}\le C_6 e^{-\af\lam t/2N}.
\end{align*}

\end{document}